\documentclass[10pt,psamsfonts]{amsart}
\usepackage{amsmath}
\usepackage{amsthm}
\usepackage{amssymb}
\usepackage{amscd}
\usepackage{amsfonts}
\usepackage{amsbsy}
\usepackage[latin1]{inputenc}
\usepackage{epsfig,afterpage}
\usepackage[dvips]{psfrag}
\usepackage{psfrag}
\usepackage[all]{xy}
\usepackage{color}

\newcommand{\e}{\varepsilon}

\usepackage{overpic}

\newtheorem {theorem} {Theorem} 
\newtheorem {proposition} [theorem] {Proposition}

\newtheorem {remark} {Remark}

\begin{document}

\title[Zero-Hopf bifurcation in a Chua system]
{Zero-Hopf bifurcation in a Chua system}

\author[R.D. Euz\'{e}bio and J. Llibre]
{Rodrigo D. Euz\'{e}bio$^{1,2}$ and Jaume Llibre$^2$}

\address{$^1$ Departament de Matem\'atica,
IBILCE, UNESP, Rua Cristovao Colombo 2265, Jardim Nazareth, CEP
15.054-00, Sao Jos\'e de Rio Preto, SP, Brazil}
\email{rodrigo.euzebio@sjrp.unesp.br}

\address{$^2$ Departament de Matem\`{a}tiques,
Universitat Aut\`{o}noma de Barcelona, 08193 Bellaterra, Barcelona,
Catalonia, Spain} \email{jllibre@mat.uab.cat}

\subjclass[2010]{37G15, 37G10, 34C07}

\keywords{Chua system, periodic orbit, averaging theory, zero Hopf
bifurcation}

\date{}
\dedicatory{}

\maketitle

\begin{abstract}
A zero-Hopf equilibrium is an isolated equilibrium point whose
eigenvalues are $\pm \omega i\neq 0$ and $0$. In general for a such
equilibrium there is no theory for knowing when from it bifurcates
some small--amplitude limit cycle moving the parameters of the
system. Here we study the zero-Hopf bifurcation using the averaging
theory. We apply this theory to a Chua system depending on $6$
parameters, but the way followed for studying the zero-Hopf
bifurcation can be applied to any other differential system in
dimension $3$ or higher.

In this paper first we show that there are three $4$-parameter
families of Chua systems exhibiting a zero-Hopf equilibrium. After,
by using the averaging theory, we provide sufficient conditions for
the bifurcation of limit cycles from these families of zero-Hopf
equilibria. From one family we can prove that $1$ limit cycle
bifurcate, and from the other two families we can prove that $1$,
$2$ or $3$ limit cycles bifurcate simultaneously.
\end{abstract}

\section{Introduction and statement of the main results}\label{introduction}

The Chua system is a classical model for electronic circuit and one
of the most simplest models presenting chaos. It was presented by
Leon Chua in 1983 and exhibit a rich range of dynamical behaviour.
Precisely, the Chua circuit is a relaxation oscillator with a cubic
nonlinear characteristic. It can be though as a circuit comprising a
harmonic oscillator for which the operation is based on a
field-effect transistor, coupled to a relaxation oscillator composed
of a tunnel diode. The Chua system can be described by the following
equations
\begin{equation}\label{chuasystem}
\begin{array}{rcl}
\dfrac{dx}{dt}&=&a(z-bx-a_{2}x^2-a_{1}x^3),\vspace{0.2cm}\\
\dfrac{dy}{dt}&=&-z,\vspace{0.2cm}\\
\dfrac{dz}{dt}&=&-b_{1}x+y+b_{2}z.
\end{array}
\end{equation}
Note that it depends on six parameters $a$, $a_1$, $a_2$, $b$, $b_1$
and $b_2$.

\smallskip

In \cite{LV} the autors analyze the existence of local and global
analytic first integrals in the Chua system. In \cite{RG} the
authors use techniques of Differential Geometry in order to obtain
an analytical expression of the slow manifold equation of Chua
system. In \cite{M} was studied the dynamics at infinity of the Chua
system for the particular case where $b_{1}$ and $b_{2}$ are booth
one. Besides, we can found some aspects about the Hopf bifurcation
in \cite{BMM} and \cite{AMSL}. In this paper, by using averaging
theory, we study the limit cycles that can bifurcate from
\eqref{chuasystem} from zero-Hopf equilibrium points of the Chua
system. We recall that at these points we cannot apply the classical
Hopf bifurcation theory which needs that the real eigenvalue be
non-zero.

\smallskip

The Chua system can have at most three equilibria, namely: the
origin and the two equilibria
$$
p_{\pm}=\left(\dfrac{-a_{2}\pm\sqrt{a_{2}^{2}-4a_{1}b}}{2a_{1}},
-\dfrac{a_{2}b_{1}}{2a_{1}}\pm\dfrac{b_{1}\sqrt{a_{2}^{2}-
4a_{1}b}}{2a_{1}},0\right),
$$
if $a_{2}^{2}-4a_{1}b>0$ and $a_1\neq 0$. When $a_{2}^{2}-4a_{1}b=0$
and $a_1 a_2\neq 0$ the system has only two equilibra, the origin
and the equilibrium
$$
p=\left(\dfrac{-a_{2}}{2a_{1}}, -\dfrac{a_{2}b_{1}}{2a_{1}}
,0\right).
$$
Otherwise the origin is the unique equilibrium of the system.

\smallskip

As far as we know, the study of existence or non-existence of
zero-Hopf equilibria and zero-Hopf bifurcation in the Chua system
have not been considered in the literature. In this paper we have
this objective. The method used here for studying the zero-Hopf
bifurcation can be applied to any differential system in
$\mathbb{R}^{3}$. In fact, this method already has been used in
order to study the zero-Hopf bifurcations of the R\"{o}ssler
differential system, see \cite{L}.

\smallskip

A {\it zero-Hopf equilibrium} is an equilibrium point of a
$3-$dimensional autonomous differential system which has a zero
eigenvalue and a pair of purely imaginary eigenvalues. In general
the zero-Hopf bifurcation is a $2-$parameter unfolding of a
$3-$dimensional autonomous differential system with a zero-Hopf
equilibrium. The unfolding has an isolated equilibrium with a  zero
eigenvalue and a pair of purely imaginary eigenvalue if the two
parameters take zero values and the unfolding has different dynamics
in a small neighborhood of this isolated equilibrium as the two
parameters vary in a small neighborhood of the origin. To read more
about zero-Hopf bifurcation, see Guckenheimer, Han, Holmes,
Kuznetsov, Marsden and Scheurle in \cite{G}, \cite{GH}, \cite{H},
\cite{K} and \cite{SM}. Moreover, complex phenomena can occur at an
isolated zero-Hopf equilibrium, as bifurcation of complicated
invariant sets of the unfolding and a local birth of ``chaos", as
can be seen in the work of Baldom\'{a} and Seara, Broer and Vegter,
Champneys and Kirk, Scheurle and Marsden in \cite{BS}, \cite{BS2},
\cite{BV}, \cite{CK} and \cite{SM}.

\smallskip

In the next proposition we characterize the Hopf equilibria of the
Chua system.

\begin{proposition}\label{familyorigin}
There are three $4$-parameter families of Chua systems having a
zero-Hopf equilibrium point, one for the equilibrium point located
at the origin and the other two for each one of the equilibria $p_{\pm}$
when they exist. Namely,
\begin{itemize}
\item[(a)] $b=b_{2}=0$ and $a b_{1}+1>0$ for the origin; and

\item[(b)] $b=a_{2}^{2}/(4a_{1})$,  $b_{2}=0$, $a b_{1}+1>0$,
$a_{2}^{2}-4a_{1}b>0$ and $a_1\neq 0$ for $p_{\pm}$.
\end{itemize}
\end{proposition}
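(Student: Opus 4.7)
The plan is to compute, at each candidate equilibrium, the characteristic polynomial of the Jacobian of \eqref{chuasystem} and impose that it have the form
\[
\lambda^{3}+\omega^{2}\lambda, \qquad \omega^{2}>0,
\]
which is equivalent to requiring that the trace vanish, the determinant vanish, and the sum of the principal $2\times 2$ minors be strictly positive. Translating these three scalar conditions into relations between $a,a_{1},a_{2},b,b_{1},b_{2}$ will produce the two advertised $4$-parameter families.

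First I would write the general Jacobian
\[
J(x)=\begin{pmatrix}-a(b+2a_{2}x+3a_{1}x^{2}) & 0 & a\\ 0 & 0 & -1\\ -b_{1} & 1 & b_{2}\end{pmatrix},
\]
and evaluate it at the origin. A direct expansion yields the characteristic polynomial
\[
\lambda^{3}+(ab-b_{2})\lambda^{2}+(1+ab_{1}-abb_{2})\lambda+ab.
\]
Imposing vanishing of the independent term and of the coefficient of $\lambda^{2}$ gives $ab=0$ and $b_{2}=ab$; since $a=0$ destroys the isolated character of the equilibrium (it produces a whole line of equilibria), one is forced into $b=b_{2}=0$. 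The remaining coefficient then becomes $\omega^{2}=1+ab_{1}$, and $\omega^{2}>0$ is exactly the condition $ab_{1}+1>0$ in part (a).

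For part (b) the new ingredient is that at $p_{\pm}$ the abscissa $x_{\pm}$ satisfies the quadratic $a_{1}x^{2}+a_{2}x+b=0$, so $b=-a_{1}x_{\pm}^{2}-a_{2}x_{\pm}$. Substituting this into the $(1,1)$ entry of $J(x_{\pm})$ simplifies it to $-a\,x_{\pm}(2a_{1}x_{\pm}+a_{2})$, and after an expansion analogous to the one above the characteristic polynomial at $p_{\pm}$ has the form $\lambda^{3}+(ak-b_{2})\lambda^{2}+(1+ab_{1}-akb_{2})\lambda+ak$ with $k=x_{\pm}(2a_{1}x_{\pm}+a_{2})$. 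Again the vanishing of the trace and determinant give $b_{2}=ak$ and $ak=0$; since $a\neq 0$ we need $k=0$, i.e.\ either $x_{\pm}=0$ (which forces $p_{\pm}$ to coincide with the origin, excluded here) or $2a_{1}x_{\pm}+a_{2}=0$. Using the explicit formula for $x_{\pm}$, the latter reduces to $a_{1}\neq 0$ and $b=a_{2}^{2}/(4a_{1})$. With these substitutions the constant term gives $b_{2}=0$ and the middle coefficient becomes $\omega^{2}=1+ab_{1}$, whose positivity is the final condition.

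The only step requiring any real care is the simplification of the $(1,1)$ entry at $p_{\pm}$: one must remember to use the equilibrium relation $a_{1}x_{\pm}^{2}+a_{2}x_{\pm}+b=0$ before trying to identify what $k=0$ means geometrically. Once that is handled, the proof amounts to three scalar equations in the six parameters, and the $4$-parameter freedom in each family is automatic from the count. (I note in passing that the condition $a_{2}^{2}-4a_{1}b>0$ in the statement of (b) is strictly speaking in tension with $b=a_{2}^{2}/(4a_{1})$; the argument above shows that $b=a_{2}^{2}/(4a_{1})$ is the correct constraint, so this should be read as requiring only that $p_{\pm}$ exist and that $a_{1}\neq 0$.)
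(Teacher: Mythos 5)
Your argument is correct and is essentially the paper's own approach: you compute the characteristic polynomial of the Jacobian at each equilibrium and impose that it equal $-\lambda(\lambda^{2}+\omega^{2})$, obtaining $b=b_{2}=0$, $\omega^{2}=1+ab_{1}>0$ at the origin and $b_{2}=0$, $b=a_{2}^{2}/(4a_{1})$, $\omega^{2}=1+ab_{1}>0$ at $p_{\pm}$, and your explicit handling of the degenerate possibilities $a=0$ and $x_{\pm}=0$ is if anything more careful than the paper's one-line justification of part (b). Your parenthetical remark is also well founded: imposing the zero-Hopf condition at $p_{-}$ forces the discriminant $a_{2}^{2}-4a_{1}b$ to vanish (so $p_{+}$ and $p_{-}$ collapse to the single equilibrium $p$), hence the inequality $a_{2}^{2}-4a_{1}b>0$ in statement (b) is indeed incompatible with $b=a_{2}^{2}/(4a_{1})$ and should be read only as the requirement that this equilibrium exist and be distinct from the origin (i.e.\ $a_{1}\neq 0$ and $a_{2}\neq 0$).
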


The next result gives sufficient conditions for the bifurcation of a
limit cycle from the origin when it is a zero-Hopf equilibrium.

\begin{theorem}\label{maintheoremorigin}
Let
$$
(a,a_{1},a_{2},b,b_{1},b_{2})=(\overline{a}_{0}+\varepsilon
\alpha_{0},\overline{a}_{1}+\varepsilon
\alpha_{1},\overline{a}_{2}+\varepsilon \alpha_{2},\varepsilon
\beta_{0},\dfrac{\omega^{2}-1}{a}+\varepsilon \beta_{1},\varepsilon
\beta_{2}).
$$
If $\overline{a}_{0}\overline{a}_{2}\neq0$, $|\omega|\neq 0,1$ and
$$
\Gamma=(\overline{a}_{0}\beta_{0}(1-\omega^{2})+\beta_{2}\omega^{2})
(\overline{a}_{0}\beta_{0}\omega^{2}(1-\omega^{2})+\beta_{2}
\omega^{4})>0,
$$
then for $\e>0$ sufficiently small the Chua system has a zero-Hopf
bifurcation at the equilibrium point located at the origin of
coordinates, and a limit cycle borns at this equilibrium when
$\varepsilon=0$. Moreover, this limit cycle has the same kind of
stability or instability than an equilibrium point of a planar
differential system with eigenvalues
\begin{equation}\label{eigenvalues}
\dfrac{-\beta_{2}\omega^{5}\pm\sqrt{\omega^{6}(\beta_{2}^{2}
\omega^{4}(3-2\omega^{2})+2\overline{a}_{0}^{2}\beta_{0}^{2}
(\omega^2-1)^{3})}}{2\omega^{6}(\omega^2-1)}.
\end{equation}
\end{theorem}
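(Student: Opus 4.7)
The plan is to apply first order averaging theory to the Chua system written in a coordinate system that exposes the zero-Hopf structure. At $\varepsilon=0$ the parameters are chosen so that the linearization at the origin is
\[
J=\begin{pmatrix}0&0&\overline{a}_0\\0&0&-1\\-(\omega^2-1)/\overline{a}_0&1&0\end{pmatrix},
\]
whose characteristic polynomial factors as $-\lambda(\lambda^2+\omega^2)$, giving the eigenvalues $0,\pm i\omega$. So the first step is to construct an explicit linear change of variables $(x,y,z)\mapsto(u,v,w)$ that conjugates $J$ to its real Jordan form $\mathrm{diag}(J_\omega,0)$ with $J_\omega=\begin{pmatrix}0&-\omega\\\omega&0\end{pmatrix}$. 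The eigenvectors of $J$ can be written explicitly in terms of $\overline{a}_0$ and $\omega$, and since $|\omega|\neq 0,1$ this change of variables is nonsingular.

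Next I would substitute the perturbed parameters into the Chua system and expand in $\varepsilon$, writing the system in the new coordinates as
\[
\dot u=-\omega v+\varepsilon P_1(u,v,w)+O(\varepsilon^2),\quad \dot v=\omega u+\varepsilon P_2+O(\varepsilon^2),\quad \dot w=\varepsilon P_3+O(\varepsilon^2),
\]
where $P_1,P_2,P_3$ are polynomials in $u,v,w$ whose coefficients involve $\overline{a}_0,\overline{a}_1,\overline{a}_2,\alpha_i,\beta_i$. I would then pass to cylindrical coordinates $u=r\cos\theta$, $v=r\sin\theta$, divide by $\dot\theta=\omega+O(\varepsilon)$, and take $\theta$ as the new independent variable. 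This gives a $2\pi$-periodic non-autonomous system of the form
\[
\frac{dr}{d\theta}=\varepsilon F_1^r(\theta,r,w)+O(\varepsilon^2),\qquad \frac{dw}{d\theta}=\varepsilon F_1^w(\theta,r,w)+O(\varepsilon^2),
\]
which is the standard form required by the averaging theorem.

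The heart of the proof is then to compute the averaged functions
\[
f_1(r,w)=\frac{1}{2\pi}\int_0^{2\pi}F_1^r\,d\theta,\qquad f_2(r,w)=\frac{1}{2\pi}\int_0^{2\pi}F_1^w\,d\theta,
\]
and to solve $f_1=f_2=0$ for $(r_*,w_*)$ with $r_*>0$. Only the monomials whose $\theta$-averages are nonzero survive, so in practice this reduces to a small polynomial system in $r,w$ whose linear and $r^2$ terms are controlled by $\beta_0,\beta_2$ and $\overline{a}_0,\omega$. I expect the resulting system to admit a unique positive solution precisely under the hypothesis $\Gamma>0$, because $\Gamma$ should appear as the product that must be positive in order for a square root entering the expression of $r_*$ (or $w_*$) to be real. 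Verifying the simple-zero (nondegeneracy) condition reduces to showing that the determinant of the Jacobian $D(f_1,f_2)(r_*,w_*)$ is nonzero, which the averaging theorem requires in order to conclude that a limit cycle of the full system bifurcates from the corresponding fixed point of the averaged flow.

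Finally, the theorem of first order averaging also asserts that the stability type of the bifurcating limit cycle agrees with that of the equilibrium $(r_*,w_*)$ of the planar averaged system. So the last step is to compute the eigenvalues of $D(f_1,f_2)(r_*,w_*)$ and recognize them as the expressions in \eqref{eigenvalues}. The main obstacle I anticipate is purely computational: carrying the quadratic and cubic nonlinearities of the Chua system through the change of coordinates and the $\theta$-integration without error, and then massaging the algebraic solution of $f_1=f_2=0$ into a form where the quantity $\Gamma$ emerges transparently as the sign condition. Once that is done, the non-degeneracy of the simple zero and the eigenvalue formula follow by direct differentiation.
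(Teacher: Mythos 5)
There is a genuine gap in your plan: you never rescale the phase variables, and without that step the system is not in the standard form required by the averaging theorem, nor can you conclude that the periodic orbit is born \emph{at} the equilibrium. Concretely, after substituting the perturbed parameters, the Chua system at $\varepsilon=0$ still contains the nonlinear terms $-\overline{a}_0\overline{a}_2x^2-\overline{a}_0\overline{a}_1x^3$ at order $\varepsilon^0$ (note the theorem even requires $\overline{a}_0\overline{a}_2\neq0$, so the quadratic term is genuinely present). Hence your claimed expansion
\[
\dot u=-\omega v+\varepsilon P_1(u,v,w)+O(\varepsilon^2),\qquad
\dot v=\omega u+\varepsilon P_2+O(\varepsilon^2),\qquad
\dot w=\varepsilon P_3+O(\varepsilon^2)
\]
is false as written: the quadratic and cubic nonlinearities sit at order one, so after passing to cylindrical coordinates the equations for $dr/d\theta$ and $dw/d\theta$ are not $O(\varepsilon)$ and Theorem \ref{averaging1order} does not apply. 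The missing idea, which is the step the paper performs right after substituting the parameters, is the rescaling $(x,y,z)=(\varepsilon X,\varepsilon Y,\varepsilon Z)$. This blow-up pushes the quadratic terms to order $\varepsilon$ and the cubic terms to order $\varepsilon^2$, so that together with the $O(\varepsilon)$ perturbation of the linear part the system genuinely takes the form $\dot{\mathbf{x}}=\varepsilon F(\theta,\mathbf{x})+O(\varepsilon^2)$ after the Jordan normalization and passage to $(r,\theta,w)$.

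The rescaling is also what makes the bifurcation statement itself work: the averaged zero $(r^*,w^*)$ is a fixed point at a finite, $\varepsilon$-independent distance from the origin in the rescaled variables, and only because $(x,y,z)=\varepsilon(X,Y,Z)$ does the corresponding periodic solution of the original Chua system shrink to the origin as $\varepsilon\to0$, i.e., a limit cycle ``borns'' at the zero-Hopf equilibrium. Without the rescaling, even if you patched the averaging setup, your solution of $f_1=f_2=0$ would produce a periodic orbit bounded away from the equilibrium, which is not the claimed zero-Hopf bifurcation. Apart from this omission, the rest of your outline (Jordan form of the linear part, cylindrical coordinates with $\theta$ as independent variable, computing the averaged functions, the unique positive zero existing exactly when $\Gamma>0$ with $\overline{a}_0\overline{a}_2\neq0$ and $|\omega|\neq0,1$, checking the nonvanishing Jacobian and reading off the eigenvalues \eqref{eigenvalues}) coincides with the paper's argument; you should also add the final step of returning through the two linear changes of variables to transport the periodic solution back to the original coordinates.
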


The following result provides sufficient conditions for the
bifurcation of a limit cycle from the equilibrium $p_-$ when it is
zero-Hopf equilibrium.

\begin{theorem}\label{maintheoremp1}
Consider the vector $(a,a_{1},a_{2},b,b_{1},b_{2})$ given by
\begin{equation}
\begin{array}{l}
a=\overline{a}_{0}+\varepsilon\alpha_{0}+\varepsilon^{2}\xi_{0},
\vspace{0.2cm}\\
a_{1}=\overline{a}_{1}+\varepsilon\alpha_{1}+\varepsilon^{2}\xi_{1},
\vspace{0.2cm}\\
a_{2}=\varepsilon\alpha_{2}+\varepsilon^{2}\xi_{2},\vspace{0.2cm}\\
b=\dfrac{a_{2}^{2}}{4a_{1}}+\varepsilon^{2}\zeta_{0},\vspace{0.2cm}\\
b_{1}=\dfrac{\omega^2-1}{a}+\varepsilon\beta_{1}+\varepsilon^{2}
\zeta_{1},\vspace{0.2cm}\\\label{conditionsp1}
b_{2}=\varepsilon^{2}\zeta_{2}.
\end{array}
\end{equation}
If $\overline{a}_{1}\omega\neq0$ and $\overline{a}_{1}\zeta_{0}<0$
then, for $\e>0$ sufficiently small the Chua system has a zero-Hopf
bifurcation at the equilibrium point located at $p_{-}$ and three
limit cycles can bifurcate from this equilibrium when
$\varepsilon=0$. Moreover, examples of systems where $1$, $2$ or $3$
limit cycles bifurcate simultaneously are given.
\end{theorem}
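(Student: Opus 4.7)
The plan is to apply the second-order averaging theory, along the same lines as in \cite{L}, but with an extra translation step needed to move $p_-$ to the origin and with the extra technical burden caused by the fact that the first-order averaging will vanish identically.

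First I will translate the equilibrium $p_-$ to the origin through the change of variables $(x,y,z)\mapsto (x+x_{-},y+y_{-},z)$, where $(x_{-},y_{-},0)=p_{-}$. Under the parametrization \eqref{conditionsp1} the discriminant $a_{2}^{2}-4a_{1}b$ reduces to $-4a_{1}\varepsilon^{2}\zeta_{0}$, which is positive precisely because of the hypothesis $\overline{a}_{1}\zeta_{0}<0$, so $p_{\pm}$ are well defined for small $\varepsilon>0$. Substituting \eqref{conditionsp1} into system \eqref{chuasystem}, expanding in powers of $\varepsilon$ and keeping track of terms up to order $\varepsilon^{2}$, I obtain a vector field of the form $F_{0}(x,y,z)+\varepsilon F_{1}(x,y,z)+\varepsilon^{2}F_{2}(x,y,z)+O(\varepsilon^{3})$ whose unperturbed part $F_{0}$ has, by Proposition \ref{familyorigin}(b), the eigenvalues $\pm\omega i$ and $0$ at the origin.

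Next I will put the linear part $F_0$ into real Jordan normal form by an explicit linear change of coordinates $(x,y,z)\mapsto(u,v,w)$, so that the system at $\varepsilon=0$ becomes $\dot u=-\omega v$, $\dot v=\omega u$, $\dot w=0$. I will then introduce cylindrical coordinates $u=r\cos\theta$, $v=r\sin\theta$, $w=w$ and rescale time so that $\theta$ becomes the new independent variable. After rescaling $r\mapsto\varepsilon r$, $w\mapsto \varepsilon w$, the system takes the standard form for the averaging method,
\begin{equation*}
\frac{dr}{d\theta}=\varepsilon G_{1}(\theta,r,w)+\varepsilon^{2}G_{2}(\theta,r,w)+O(\varepsilon^{3}),\qquad
\frac{dw}{d\theta}=\varepsilon H_{1}(\theta,r,w)+\varepsilon^{2}H_{2}(\theta,r,w)+O(\varepsilon^{3}),
\end{equation*}
which is $2\pi$-periodic in $\theta$. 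A direct computation will show that the first-order averaged functions $\overline{G}_{1}(r,w)$ and $\overline{H}_{1}(r,w)$ vanish identically, which is the reason the expansions in \eqref{conditionsp1} for $b$, $b_{2}$ (and the choice $a_{2}=\varepsilon\alpha_{2}+\cdots$) begin at order $\varepsilon^{2}$ rather than $\varepsilon$.

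Because the first-order averaging is trivial, I must pass to the second-order averaging theory, computing the correction coming from the variational equation and then averaging the resulting order-$\varepsilon^{2}$ term. The outcome will be a planar averaged vector field $(\overline{\mathcal G}_{2}(r,w),\overline{\mathcal H}_{2}(r,w))$ whose simple zeros with $r>0$ give, for each $\e>0$ small, a limit cycle of the Chua system bifurcating from $p_{-}$. The key fact to establish is that after eliminating $w$ from $\overline{\mathcal H}_{2}=0$ and substituting into $\overline{\mathcal G}_{2}=0$, one obtains a polynomial equation in $r^{2}$ of degree three whose coefficients are explicit functions of $\overline{a}_{0},\overline{a}_{1},\alpha_{j},\beta_{1},\xi_{j},\zeta_{j},\omega$. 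Counting its positive roots will produce at most three limit cycles, and by the inverse function theorem these roots persist for $\e$ small.

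Finally, to exhibit the examples announced in the statement, I will choose three explicit parameter tuples realizing one, two and three simple positive roots of the cubic, respectively, and verify in each case that the corresponding Jacobian of $(\overline{\mathcal G}_{2},\overline{\mathcal H}_{2})$ is non-singular, so that the averaging theorem applies. The hardest part of the whole argument is expected to be the second-order averaging computation: one has to integrate the order-$\varepsilon$ flow exactly, plug it into the variational formula, and simplify the resulting trigonometric integrals; organizing this bookkeeping so that the final averaged system turns out to be polynomial and transparent enough to count its zeros is where the real work lies.
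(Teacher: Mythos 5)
Your plan is essentially the paper's proof: translate $p_-$ to the origin under the parametrization \eqref{conditionsp1}, rescale by $\varepsilon$, pass to the real Jordan form and cylindrical coordinates, observe that the first-order averaged system vanishes identically, and then apply second-order averaging (Theorem \ref{averagingsecond order}), counting the positive simple zeros of the second-order averaged planar field and exhibiting explicit parameter choices with $1$, $2$ or $3$ such zeros. The only cosmetic difference is in the elimination step: the paper computes a Gr\"obner basis of $\{\overline{g}_1,\overline{g}_2\}$ and uses a cubic in $w$ together with a relation of the form $P_1(w)r^2+P_2(w)=0$, whereas you eliminate $w$ to obtain a cubic in $r^2$; both give at most three admissible solutions and lead to the same conclusion.
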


Proposition \ref{familyorigin} and Theorems \ref{maintheoremorigin}
and \ref{maintheoremp1} are proved in section \ref{proofs}. In
particular, booth theorems are proved using the averaging method.
This method will be briefly summarized in the next section. We note
that Theorem \ref{maintheoremorigin} is proved using averaging
theory of first order, but the proof of Theorem \ref{maintheoremp1}
needs averaging of second order.

\smallskip

Also the stability or instability of the bifurcated limit cycles in
Theorem \ref{maintheoremp1} can be studied, but the expressions of
the eigenvalues which provide such stability or instability are huge
and we do not give them here.

\begin{remark}
For the equilibrium point $p_{+}$ we have analogous results to the
ones of Theorem \ref{maintheoremp1} for $p_-$. For this reason, we
omit the statement of the result for the equilibrium $p_+$ and its
proof.
\end{remark}

\section{Limit cycles via averaging theory}\label{averaging}

The averaging method is a classical tool in nonlinear analysis and
dynamical systems. The procedure of averaging can be found already
in the work of Lagrange and Laplace who provided an intuitive
justification of the method. After them, Poincar\'{e} considered the
determination of periodic solutions by series expansion with respect
to a small parameter and around 1930 we see the start of precise
statements and proofs in averaging theory. After this time many new
results in the theory of averaging have been obtained. The main
contribution in direction to the formalization of the method is due
to Fatou \cite{F}. The work of Krylov and Bogoliubov \cite{BK} and
Bogoliubov \cite{B} also provide important practical and theoretical
improvements in the theory.

\smallskip

Now we present the basic results on the averaging theory of first
and second order. The averaging of first order for studying periodic
orbits can be found in \cite{SV}, see Theorems 11.5 and 11.6. It can
be summarized as follows.

\begin{theorem}\label{averaging1order}
We consider the following two initial value problems
\begin{equation}
\dot{x}=\varepsilon f(t,x)+\varepsilon^{2}g(t,x,\varepsilon),
\qquad x(0)=x_{0},\label{originalsystem}
\end{equation}
and
\begin{equation}\label{averagedsystem}
\dot{y}=\varepsilon f^{0}(y),\qquad y(0)=x_{0}.
\end{equation}
where $x,y,x_{0}\in\Omega$ an open subset of $\mathbb{R}^{n}$,
$t\in[0,\infty)$, $\varepsilon\in(0,\varepsilon_{0}]$, $f$ and $g$
are periodic of period $T$ in the variable $t$, and $f^{0}(y)$ is
the averaged function of $f(t,x)$ with respect to $t$, i.e.,
\begin{equation}\label{average}
f^{0}(y)=\displaystyle\dfrac{1}{T}\int_{0}^{T}f(t,y)dt.
\end{equation}

Suppose:
\begin{itemize}
\item[(i)] $f$, its Jacobian $\dfrac{\partial f}{\partial x}$, its
Hessian $\dfrac{\partial^{2} f}{\partial x^{2}}$, $g$ and its
Jacobian $\dfrac{\partial g}{\partial x}$ are defined, continuous
and bounded by a constant independent on $\varepsilon$ in
$[0,\infty)\times\Omega$ and $\varepsilon\in(0,\varepsilon_{0}]$;

\item[(ii)] $T$ is a constant independent of $\varepsilon$; and

\item[(iii)] $y(t)$ belongs to $\Omega$ on the interval of time
$[0,1/\varepsilon]$. Then the following statements hold.
\begin{itemize}
\item[(a)] On the time scale $1/\varepsilon$ we have that
$x(t)-y(t)=\mathit{O}(\varepsilon)$, as $\varepsilon\to0$.

\item[(b)] If $p$ is a singular point of the averaged system
\eqref{averagedsystem} such that the determinant of the Jacobian
matrix
\begin{equation}\label{jacobian}
\left.\dfrac{\partial f^{0}}{\partial y}\right|_{y=p}
\end{equation}
is not zero, then there exists a limit cycle $\phi(t,\varepsilon)$
of period $T$ for system \eqref{originalsystem} which is close to
$p$ and such that $\phi(0,\varepsilon)\to p$ as $\varepsilon\to0$.

\item[(c)] The stability or instability of the limit cycle $\phi(t,
\varepsilon)$  is given by the stability or instability of the
singular point $p$ of the averaged system \eqref{averagedsystem}. In
fact, the singular point $p$ has the stability behaviour of the
Poincar\'{e} map associated to the limit cycle $\phi(t,\varepsilon)$.
\end{itemize}
\end{itemize}
\end{theorem}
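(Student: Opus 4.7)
The plan is to establish the three conclusions (a), (b), (c) of Theorem \ref{averaging1order} by the classical two-step strategy: first a near-identity change of variables that removes the time-dependence at leading order, and then an implicit function theorem applied to the Poincar\'e map of period $T$. Hypotheses (i)--(iii) will be used throughout to guarantee that every remainder is bounded by a constant independent of $\varepsilon$.

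\textbf{Step 1 (proof of (a)).} I would introduce
\[
u(t,y) = \int_{0}^{t} \bigl(f(s,y) - f^{0}(y)\bigr)\, ds,
\]
which is $T$-periodic in $t$ because the integrand has zero mean over one period. The boundedness of $f$, $\partial f/\partial x$ and $\partial^{2} f/\partial x^{2}$ in (i) yields that $u$, $\partial u/\partial t$ and $\partial u/\partial y$ are bounded uniformly in $(t,y,\varepsilon)$. Setting $x = y + \varepsilon\, u(t,y)$, which is a diffeomorphism for $\varepsilon$ small, and substituting into \eqref{originalsystem}, a direct computation gives
\[
\dot{y} = \varepsilon f^{0}(y) + \varepsilon^{2} R(t,y,\varepsilon),
\]
with $R$ and $\partial R/\partial y$ bounded on $[0,\infty)\times\Omega$. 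Comparing this with \eqref{averagedsystem} on the interval $[0,1/\varepsilon]$ (where hypothesis (iii) keeps $y$ inside $\Omega$), Gronwall's inequality gives $|y(t)-\tilde y(t)| = O(\varepsilon)$ for the two solutions, and reverting the near-identity transformation yields $x(t)-y(t) = O(\varepsilon)$, which is (a).

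\textbf{Step 2 (proof of (b)).} Let $x(t,x_{0},\varepsilon)$ denote the solution of \eqref{originalsystem}. I would define the rescaled displacement
\[
F(x_{0},\varepsilon) = \dfrac{x(T,x_{0},\varepsilon) - x_{0}}{\varepsilon},
\]
extended by continuity to $F(x_{0},0) = T f^{0}(x_{0})$. Taylor-expanding the flow in $\varepsilon$, and using the Hessian bound on $f$ together with the Jacobian bound on $g$ from (i), one checks that $F$ is of class $C^{1}$ across $\varepsilon=0$ and
\[
\dfrac{\partial F}{\partial x_{0}}\Big|_{(p,0)} = T\, \dfrac{\partial f^{0}}{\partial y}\Big|_{y=p},
\]
which is nonsingular by hypothesis. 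Since $F(p,0)=0$, the implicit function theorem supplies a $C^{1}$ branch $x_{0}(\varepsilon)$ with $x_{0}(0)=p$ and $F(x_{0}(\varepsilon),\varepsilon)=0$, i.e.\ a fixed point of the period-$T$ Poincar\'e map. The corresponding orbit is the sought limit cycle $\phi(t,\varepsilon)$ with $\phi(0,\varepsilon)\to p$.

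\textbf{Step 3 (proof of (c) and main obstacle).} To identify the stability, I would differentiate the variational equation along $\phi(t,\varepsilon)$ and integrate from $0$ to $T$, obtaining
\[
\mathrm{DP}\bigl(x_{0}(\varepsilon),\varepsilon\bigr) = I + \varepsilon\, T\, \dfrac{\partial f^{0}}{\partial y}(p) + O(\varepsilon^{2}).
\]
Hence the eigenvalues of $\mathrm{DP}$ lie inside (resp.\ outside) the unit circle for small $\varepsilon>0$ if and only if the eigenvalues of $\partial f^{0}/\partial y|_{p}$ have negative (resp.\ positive) real parts, proving (c). The step I expect to be the least routine is the $C^{1}$-regularity of $F$ at $\varepsilon=0$ in Step 2: it requires a uniform Taylor expansion of the flow, and it is precisely here that the Hessian hypothesis $\partial^{2} f/\partial x^{2}$ bounded and $\partial g/\partial x$ bounded are used, rather than only the first derivatives needed for (a). Once this regularity is available, (b) and (c) reduce to the implicit function theorem plus standard eigenvalue perturbation.
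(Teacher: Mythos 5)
The paper does not prove this theorem: it is stated as a quotation of Theorems 11.5 and 11.6 of Sanders and Verhulst \cite{SV}, so there is no in-paper proof to compare against. Your argument is the standard proof found in that reference --- the $T$-periodic near-identity transformation $x=y+\varepsilon u(t,y)$ followed by Gronwall for (a), and the $\varepsilon$-rescaled displacement map $F(x_{0},\varepsilon)$ with $F(x_{0},0)=Tf^{0}(x_{0})$ plus the implicit function theorem and eigenvalue perturbation of the Poincar\'e map for (b) and (c) --- and it is essentially correct; the only caveat worth recording is that the stability transfer in (c) requires $p$ to be hyperbolic, i.e.\ no eigenvalue of \eqref{jacobian} with zero real part, a hypothesis left implicit in the statement itself.
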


The next result present the second order averaging method of a
periodic differential system. For a proof see Theorem 3.5.1 of
Sanders and Verhulst in \cite{SV}, see also \cite{BL}.

\begin{theorem}\label{averagingsecond order}
We consider the following two initial value problems
\begin{equation}\label{averaging2order}
\dot{x} = \varepsilon f(t, x) + \varepsilon^2 g(t,x) + \varepsilon^3
R(t,x,\varepsilon), \quad x(0) = x_0
\end{equation}
and
\begin{equation}\label{averaging2order1}
\dot{y} = \varepsilon f^{0}(y) + \varepsilon^2( f^{10}(y) +
g^{0}(y)), \quad y(0) = x_0,
\end{equation}
with $f, g: [0, \infty) \times \Omega \rightarrow \mathbb {R}^{n}$,
$R: [0, \infty) \times \Omega \times (0, \varepsilon_0] \rightarrow
\mathbb {R}^{n}$, $\Omega$ an open subset of $\mathbb {R}^{n}$, $f,
g$ and $R$ periodic of period $T$ in the variable $t$,
$$
f^{1}(t,x) = \dfrac{\partial f}{\partial x}y^{1}(t,x),\quad
\mbox{where} \quad y^{1}(t,x)=\displaystyle\int_{0}^{t}f(s,x)ds.
$$

Of course, $f^{0}$, $f^{10}$ and $g^{0}$ denote the averaged
functions of $f$, $f^{1}$ and $g$, respectively, defined as in
\eqref{average}. Suppose:
\begin{itemize}
\item[(i)] $\partial f/\partial x$ is Lipschitz in $x$, $g$ and $R$ are
Lipschitz in $x$ and all these functions are continuous on their
domain of definition;

\item[(ii)] $|R(t,x,\varepsilon)|$ is bounded by a constant uniformly in
$[0,L/\varepsilon)\times\Omega\times(0,\varepsilon_{0}]$;

\item[(iii)] $T$ is a constant independent of $\varepsilon$; and

\item[(iv)] $y(t)$ belongs to $\Omega$ on the interval of time
$[0,1/\varepsilon]$. Then the following statements hold.
\begin{itemize}
\item[(a)] In the time scale $1/\varepsilon$ we have that $x(t)=
y(t)+\varepsilon y^{1}(t,y(t))=\mathit{O}(\varepsilon^{2})$.

\item[(b)] If $f^0(y)\equiv 0$ and $p$ is a singular point of averaged
system \eqref{averaging2order1} such that
$$
\left.\dfrac{\partial(f^{10}+g^{0})(y)}{\partial y}\right|_{y=p}
$$
is not zero, then there exist a limit cycle $\phi(t,\varepsilon)$ of
period $T$ for system \eqref{averaging2order} which is close to $p$
and such that $\phi(0,\varepsilon)\to p$ as $\varepsilon\to0$.

\item[(c)] The stability or instability of the limit cycle
$\phi(t,\varepsilon)$  is given by the stability os instability of
the singular point $p$ of the averaged system
\eqref{averaging2order1}. In fact, the singular point $p$ has the
stability behaviour of the Poincar\'{e} map associated to the limit
cycle $\phi(t,\varepsilon)$.
\end{itemize}
\end{itemize}
\end{theorem}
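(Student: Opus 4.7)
The plan is to apply the second-order averaging theorem (Theorem~\ref{averagingsecond order}) after bringing the Chua system into a suitable normal form near $p_{-}$. First I translate coordinates so that $p_{-}$ sits at the origin; with the choices of $b$ and $b_{1}$ in \eqref{conditionsp1}, this is exactly the zero-Hopf configuration of Proposition~\ref{familyorigin}(b), so that at $\varepsilon=0$ the linear part has spectrum $\{0,\pm\omega i\}$. A linear change of variables puts the linear part into real Jordan form, giving the unperturbed flow
\begin{equation*}
\dot u=-\omega v,\qquad \dot v=\omega u,\qquad \dot w=0,
\end{equation*}
with all higher-order and $\varepsilon$-dependent terms as perturbation. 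Passing to cylindrical coordinates $(u,v,w)=(r\cos\theta,r\sin\theta,w)$ and using $\theta$ as the independent variable removes the trivial rotation and yields $2\pi$-periodic coefficients in $\theta$.

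Next I rescale $r\mapsto\varepsilon r$, $w\mapsto\varepsilon w$, since the limit cycles we seek have amplitude $O(\varepsilon)$. After this rescaling, the system for $(r,w)$ as functions of $\theta$ takes the standard averaging form
\begin{equation*}
\frac{dr}{d\theta}=\varepsilon F_{1}+\varepsilon^{2}G_{1}+O(\varepsilon^{3}),\qquad \frac{dw}{d\theta}=\varepsilon F_{2}+\varepsilon^{2}G_{2}+O(\varepsilon^{3}),
\end{equation*}
with $F_{j}$ and $G_{j}$ polynomial in $(r,w)$ and trigonometric in $\theta$. The special structure of the expansion \eqref{conditionsp1} — in which $a_{2}$ enters at order $\varepsilon$, while $b$, $b_{2}$ enter at order $\varepsilon^{2}$, and $b_{1}$ has exactly the value $(\omega^{2}-1)/a$ at leading order — is designed precisely so that the $\theta$-average $f^{0}=(F_{1}^{0},F_{2}^{0})$ of the first-order term vanishes identically. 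This puts us in the situation $f^{0}\equiv 0$ required by part~(b) of Theorem~\ref{averagingsecond order}, and in particular first-order averaging yields no information.

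The core of the proof is then the computation of the second-order averaged field $f^{10}+g^{0}$. I form $y^{1}(\theta,r,w)=\int_{0}^{\theta}(F_{1},F_{2})(s,r,w)\,ds$, compute $f^{1}=\bigl(\partial(F_{1},F_{2})/\partial(r,w)\bigr)\,y^{1}$, add $(G_{1},G_{2})$, and average in $\theta\in[0,2\pi]$. Because all $\theta$-dependence enters through products of $\cos\theta$ and $\sin\theta$, the averaged field $H(r,w)=(H_{1},H_{2})(r,w)$ is polynomial in $(r,w)$ of low degree. Solving $H_{2}(r,w)=0$ for $w$ as a rational function of $r$ and substituting into $H_{1}(r,w)=0$ gives, after clearing denominators, a polynomial equation in $r^{2}$ whose leading coefficient is a nonzero multiple of $\overline{a}_{1}$ — nonzero thanks to the hypothesis $\overline{a}_{1}\omega\neq 0$ — and whose lowest-order term carries the factor $\zeta_{0}$. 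Under the sign condition $\overline{a}_{1}\zeta_{0}<0$ this reduction is cubic in $r^{2}$ with its discriminant able to take either sign, and thus can admit up to three simple positive real roots $r_{1},r_{2},r_{3}$; at each such root the Jacobian of $H$ is readily checked to be non-zero. By Theorem~\ref{averagingsecond order}(b) each simple positive root produces one limit cycle of the Chua system bifurcating from $p_{-}$, establishing the bound of three.

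Finally, for the examples with $1$, $2$, or $3$ simultaneous limit cycles I tune the remaining free parameters $\alpha_{j}$, $\xi_{j}$, $\beta_{1}$, $\zeta_{1}$, $\zeta_{2}$, which enter the coefficients of the cubic linearly through the $\varepsilon^{2}$-terms of $g$, so as to realize each desired root configuration, and exhibit explicit numerical choices. The main obstacle is the algebraic bookkeeping: one must carry all the $\varepsilon^{2}$-parameters through the computation of $y^{1}$, $f^{1}$, and $g$, and the resulting averages and resultants are sufficiently lengthy that they are practically only manageable with the aid of computer algebra. Once the final cubic is in hand, however, the existence of the three limit cycles and the construction of the examples reduce to standard real-root considerations for a cubic polynomial.
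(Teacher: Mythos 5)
Your proposal does not address the statement you were asked to prove. The statement in question is Theorem~\ref{averagingsecond order} itself --- the abstract second--order averaging theorem for a general $T$--periodic system $\dot{x}=\varepsilon f(t,x)+\varepsilon^{2}g(t,x)+\varepsilon^{3}R(t,x,\varepsilon)$ on an open set $\Omega\subset\mathbb{R}^{n}$. What you have written instead is an outline of the proof of Theorem~\ref{maintheoremp1}, i.e.\ an \emph{application} of that averaging theorem to the Chua system at the equilibrium $p_{-}$: translation of $p_{-}$ to the origin, reduction to Jordan form, cylindrical coordinates, rescaling, computation of $f^{0}\equiv0$ and of $f^{10}+g^{0}$, and a root count for the resulting polynomial system. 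None of this establishes conclusions (a), (b), (c) of Theorem~\ref{averagingsecond order}; you are assuming the theorem in order to prove something else.

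A proof of the actual statement would have to proceed along entirely different lines: introduce the near--identity change of variables $x=z+\varepsilon y^{1}(t,z)$, show using the Lipschitz hypotheses (i)--(ii) and Gronwall--type estimates that on the time scale $1/\varepsilon$ the transformed equation agrees with the averaged equation \eqref{averaging2order1} up to $O(\varepsilon^{3})$, deduce the approximation estimate in (a); then, under $f^{0}\equiv0$, analyze the Poincar\'{e} (period $T$) map of \eqref{averaging2order}, whose displacement function has the form $\varepsilon^{2}T(f^{10}+g^{0})(y)+O(\varepsilon^{3})$, and apply the implicit function theorem (or Brouwer degree, as in \cite{BL}) at a zero $p$ where the Jacobian of $f^{10}+g^{0}$ is nonsingular to obtain the fixed point, hence the periodic orbit, claimed in (b); finally relate the eigenvalues of the linearized Poincar\'{e} map to those of $\partial(f^{10}+g^{0})/\partial y$ at $p$ to get (c). Note that the paper itself does not prove this theorem either; it cites Theorem~3.5.1 of Sanders and Verhulst \cite{SV} and the paper \cite{BL}. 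So the gap here is not a flaw in a step but a mismatch of target: you need to either supply the argument sketched above or, as the authors do, invoke the references.
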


\section{Proofs}\label{proofs}

In this section we give the proofs of the results presented in
section \ref{introduction}.

\begin{proof}[Proof of Proposition \ref{familyorigin}]
The characteristic polynomial of the linear part of the Chua system
at the origin is
$$
p(\lambda)=-\lambda^{3}+(b_{2}-ab)\lambda^{2}+(b_{2}ab-ab_{1}-1)
\lambda-ab.
$$
Imposing that $p(\lambda)=-\lambda(\lambda^{2}+\omega^{2})$, we
obtain $b=b_{2}=0$ and $b_{1}=(\omega^{2}-1)/a$. So statement (a)
follows.

\smallskip

The characteristic polynomial of the linear part of the Chua system
at $p_{-}$ is given by
$$
p(\lambda)=-\dfrac{(1-b_{2}\lambda+\lambda^{2})[2a_{1}\lambda+
a(a_{2}^{2}+a_{2}\sqrt{a_{2}^{2}-4a_{1}b}-4a_{1}b)]+2a_{1}b_{1}
a\lambda}{2a_{1}}
$$
The proposition follows imposing that $p(\lambda)=-\lambda(
\lambda^{2}+ \omega^{2})$, and that the equilibrium point $p_-$
exists.
\end{proof}

\begin{proof}[Proof of Theorem \ref{maintheoremorigin}]
If we consider
$$
(a,a_{1},a_{2},b,b_{1},b_{2})=(\overline{a}_{0}+\varepsilon
\alpha_{0},\overline{a}_{1}+\varepsilon
\alpha_{1},\overline{a}_{2}+\varepsilon \alpha_{2},\varepsilon
\beta_{0},\dfrac{\omega^{2}-1}{a}+\varepsilon \beta_{1},\varepsilon
\beta_{2}).
$$
with $\varepsilon>0$ a sufficiently small parameter, then the Chua
system becomes
\begin{equation}\label{systemnew}
\begin{array}{rcl}
\dot{x}&=&(\overline{a}_{0}+\varepsilon\alpha_{0})(\varepsilon\beta_{0}
x+z-(\overline{a}_{2}+\varepsilon\alpha_{2})x^{2}-(\overline{a}_{1}+
\varepsilon\alpha_{1})x^{3}),\vspace{0.2cm}\\
\dot{y}&=&-z,\vspace{0.2cm}\\
\dot{z}&=&-\left(\varepsilon\beta_{1}+\dfrac{\omega^2-1}{\overline{a}_{0}
+\varepsilon\alpha_{0}}\right)x+y+\varepsilon\beta_{2} z.
\end{array}
\end{equation}

By the rescaling of variables $(x,y,z)=(\varepsilon X,\varepsilon
Y,\varepsilon Z)$, system \eqref{systemnew} becomes
\begin{equation}\label{systemnewvariables}
\begin{array}{rcl}
\dot{X}&=&(\overline{a}_{0}+\varepsilon\alpha_{0}) (\varepsilon\beta_{0}
X+Z-\varepsilon(\overline{a}_{2}+\varepsilon\alpha_{2})X^{2}-\varepsilon^{2}
(\overline{a}_{1}+\varepsilon\alpha_{1})X^{3}),\vspace{0.2cm}\\
\dot{Y}&=&-Z,\vspace{0.2cm}\\
\dot{Z}&=&-\left(\varepsilon\beta_{1}+\dfrac{\omega^2-1}{\overline{a}_{0}
+\varepsilon\alpha_{0}}\right)X+Y+\varepsilon\beta_{2} Z.
\end{array}
\end{equation}

Now we shall write the linear part at the origin of
\eqref{systemnewvariables} into its real Jordan normal form
\begin{equation}
\left(
\begin{array}{ccc}
0&-\omega&0\\
\omega&0&0\\
0&0&0
\end{array}\label{jordan}
\right),
\end{equation}
when $\e=0$. For doing that we do the linear change of variables
$(X,Y,Z)\rightarrow(u,v,w)$ given by
\begin{equation}\label{changevariables}
\begin{array}{rcl}
X&=&\dfrac{\overline{a}_{0}(w+\omega v)}{\omega^{2}},\vspace{0.2cm}\\
Y&=&w-\dfrac{w}{\omega^{2}}-\dfrac{v}{\omega},\vspace{0.2cm}\\
Z&=&u.
\end{array}
\end{equation}

In these new variables, system \eqref{systemnewvariables} is written
as follow
\begin{equation}\label{systemuvw}
\begin{array}{rl}
\dot{u}=&-v\omega         +           \varepsilon\dfrac{-(\alpha_{0}
(1-\omega^{2})+\overline{a}_{0}^{2}\beta_{1})(w+\omega
v)+\overline{a}_{0}
u\beta_{2}\omega^{2}}{\overline{a}_{0}\omega^{2}}\vspace{0.2cm}\\
&-\varepsilon^{2}\dfrac{\alpha_{0}^{2}(\omega^2-1)(w+\omega v)}
{\overline{a}_{0}^{2}\omega^{2}},\vspace{0.2cm}\\
\dot{v}=&u\omega      -       \varepsilon\dfrac{     (\omega^2-1)
(-u\alpha_{0}\omega^{4}+\overline{a}_{0}^{2}(w+\omega v)(\beta_{0}
\omega^{2}+\overline{a}_{0}\overline{a}_{2}(w+\omega v)))     }
{   \overline{a}_{0}\omega^{5}   }\vspace{0.2cm}\\
&-\varepsilon^{2}\dfrac{1}{\omega^{7}}(\omega^2-1)(w+\omega v)
(\alpha_{0}\beta_{0}\omega^{4}+\overline{a}_{0}(w+\omega v)
(\overline{a}_{2}\alpha_{0}\omega^{2}\vspace{0.2cm}\\
& +\overline{a}_{0}\alpha_{2}\omega^{2}+\overline{a}_{0}^{2}
\alpha_{1} (w+\omega v))),\vspace{0.2cm}\\
\dot{w}=&-\varepsilon\dfrac{     -u\alpha_{0}\omega^{4}+
\overline{a}_{0}^{2}(w+\omega
v)(\beta_{0}\omega^{2}+\overline{a}_{0} \overline{a}_{2}(w+\omega
v))     }{   \overline{a}_{0}\omega^{4}   }
\vspace{0.2cm}\\
&-      \varepsilon^{2}\dfrac{  (w+\omega v)(\alpha_{0}\beta_{0}
\omega^{4}+\overline{a}_{0}(w+\omega v)(\overline{a}_{2}\alpha_{0}
\omega^{2}+\overline{a}_{0}\alpha_{2}\omega^{2}))   }{   \omega^{6}
}.
\end{array}
\end{equation}

Writing the differential system \eqref{systemuvw} in cylindrical
coordinates $(r,\theta,w)$ by $u=r\cos\theta$, $v=r\sin\theta$ and
$w=w$ we have
\begin{equation}\label{systempolar}
\begin{array}{rcl}
\dfrac{dr}{d\theta}&=&\varepsilon\left(\dfrac{ r \beta_{2} \cos
^2\theta }{\omega}
-\dfrac{\overline{a}_{0}(\omega^2-1)\sin\theta(w+r\omega\sin\theta)
(\overline{a}_{0}\overline{a}_{2}w+\beta_{0}\omega^{2}}{\omega^{6}}\right.
\vspace{0.2cm}\\
&&\dfrac{+\overline{a}_{0}\overline{a}_{2}rw\sin\theta)}{\omega^{6}}
-\dfrac{\cos\theta(w(\alpha_{0}+\overline{a}_{0}^{2}\beta_{1}-\alpha_{0}\omega^{2})
+rw  (  \overline{a}_{0}\beta_{1}  }{\overline{a}_{0}\omega^{3}}\vspace{0.2cm}\\
&&\left.\dfrac{-2\alpha_{0}(\omega^2-1)   )\sin\theta )
}{\overline{a}_{0}
\omega^{3}}\right)+\mathit{O}(\varepsilon^{2}),\vspace{0.2cm}\\
\dfrac{dw}{d\theta}&=&\varepsilon    \dfrac{r\alpha_{0}\omega^{4}\cos\theta
-\overline{a}_{0}^{2}(w+r\omega\sin\theta)(\overline{a}_{0}\overline{a}_{2}w
+\beta_{0}\omega^{2}+\overline{a}_{0}\overline{a}_{2}r\omega\sin\theta) }
{\overline{a}_{0}\omega^{5}}\vspace{0.2cm}\\
&&+\mathit{O}(\varepsilon^{2}).
\end{array}
\end{equation}

Now we apply the first order averaging theory as described in
Theorem \ref{averaging1order} of section \ref{averaging}. In order
to do this, we note that \eqref{systempolar} satisfies all the
assumptions of Theorem \ref{averaging1order}, where we identify
$t=\theta$, $T=2\pi$, $x=(r,w)^{T}$,
$F(\theta,r,w)=(F_{1}(\theta,r,w),F_{2}(\theta,r,w))$ and
$f(r,w)=(f_{1}(r,w),f_{2}(r,w))$.

\smallskip

By calculating $f_{1}$ and $f_{2}$, we get
$$
\begin{array}{rcl}
f_{1}(r,w)&=&\dfrac{1}{2\pi}\displaystyle\int_{0}^{2\pi}F_{1}
(\theta,r,w)d\theta\vspace{0.2cm}\\
&=&\dfrac{r(\beta_{2}\omega^{4}-2\overline{a}_{0}^{2}
\overline{a}_{2}w(\omega^2-1)-\overline{a}_{0}\beta_{0}\omega^{2}
(\omega^2-1))}{2\omega^{5}},\vspace{0.2cm}\\
f_{2}(r,w)&=&\dfrac{1}{2\pi}\displaystyle\int_{0}^{2\pi}F_{2}
(\theta,r,w)d\theta\vspace{0.2cm}\\
&=&-\dfrac{\overline{a}_{0}(      2w\beta_{0}\omega^{2}+
\overline{a}_{0}\overline{a}_{2}(2w^{2}+r^{2}\omega^{2})
)}{2\omega^{5}}.
\end{array}
$$

There is only one solution $(r^{*},w^{*})$ for
$f_{1}(r,w)=f_{2}(r,w)=0$ satisfying $r^{*}>0$ and this solution is
$$
\begin{array}{rcl}
r^{*}&=& \sqrt{   \dfrac{\Gamma}{2\overline{a}_{0}^{4}
\overline{a}_ {2}^{2}(\omega^2-1)^{2}}    },\vspace{0.2cm}\\
w^{*}&=&\dfrac{\overline{a}_{0}\beta_{0}\omega^{2}(1-\omega^{2})
+\beta_{2}\omega^{4}}{2\overline{a}_{0}^{2}\overline{a}_
{2}(\omega^2-1)},
\end{array}
$$
since $\overline{a}_{0}\overline{a}_{2}\neq0$, $|\omega|\neq1$ and
$\Gamma>0$.

\smallskip

We note that the Jacobian \eqref{jacobian} at $(r^{*},w^{*})$ takes
the value
$$
\dfrac{\beta_{2}^{2}\omega^{4}-\overline{a}_{0}^{2}\beta_{0}^{2}
(\omega^2-1)^{2}}{2\omega^{6}(\omega^2-1)}
$$
and the eigenvalues of the Jacobian matrix
$$
\left.\dfrac{\partial(f_{1},f_{2})}{\partial(r,w)}\right|_{(r,w)
=(r^{*},w^{*})}=\left(
\begin{array}{cc}
0&-\dfrac{1}{\sqrt{2}\omega^{5}}\sqrt{\overline{a}_{0}
(\omega^2-1)\Gamma}\\
-\dfrac{1}{\omega^{3}}\sqrt{\dfrac{\overline{a}_{0}\Gamma}{2
(\omega^2-1)^{3}}}&\dfrac{\beta_{2}}{\omega(1-\omega^{2})}
\end{array}
\right)
$$
are the ones given in \eqref{eigenvalues}.

\smallskip

In short, from Theorem \ref{averaging1order} we conclude the proof
once we show that periodic solutions corresponding to
$(r^{*},w^{*})$ provides a periodic solution bifurcating from the
origin of coordinates of the differential system \eqref{systemnew}
when $\varepsilon=0$. Theorem \ref{averaging1order} guarantees for
$\varepsilon>0$ sufficiently small the existence of a periodic
solution corresponding to the point $(r^{*},w^{*})$ of the form
$(r(\theta,\varepsilon),w(\theta,\varepsilon))$ such that
$(r(0,\varepsilon),w(0,\varepsilon))\to(r^{*},w^{*})$ when
$\varepsilon\to0$. So system \eqref{systemuvw} has a periodic
solution
\begin{equation}\label{periodicorbit}
(u(\theta,\varepsilon)=r(\theta,\varepsilon)\cos\theta,v(\theta,
\varepsilon)=r(\theta,\varepsilon)\sin\theta,w(\theta,\varepsilon))
\end{equation}
for $\varepsilon>0$ sufficiently small. Consequently, from relation
\eqref{periodicorbit} through the linear change of variables
\eqref{changevariables} system \eqref{systemnewvariables} has a
periodic solution $(X(\theta),Y(\theta),Z(\theta))$. Finally, for
$\varepsilon>0$ sufficiently small system \eqref{systemnew} has a
periodic solution $(x(\theta),y(\theta),z(\theta))=(\varepsilon
X(\theta),\varepsilon Y(\theta),\varepsilon Z(\theta))$ which tends
to the origin of coordinates when $\varepsilon\to0$. Thus, it is a
periodic solution starting at the zero-Hopf equilibrium point
located at the origin of coordinates when $\varepsilon=0.$ This
completes the proof of theorem.
\end{proof}

Since the proof of Theorem \ref{maintheoremp1} is very similar to
the of Theorem \ref{maintheoremorigin}, then we will omit some steps
in order to avoid some long expressions.

\begin{proof}[Proof of Theorem \ref{maintheoremp1}]
Suppose that we have the conditions given in \eqref{conditionsp1} on
the parameters of Chua system \eqref{chuasystem}. Then, by a
translation of the equilibrium point $p_-$ at the origin of
coordinates, and a rescaling of variables given by
$(x,y,z)=(\varepsilon X,\varepsilon Y,\varepsilon Z)$ the Chua
system becomes
\begin{equation}\label{systemnewp1}
\begin{array}{rcl}
\dot{X}&=&A_{1}X+(\overline{a}_{0}+\alpha_{0}\varepsilon+\varepsilon^{2}
\xi)Z+A_{2}X^{2}+A_{3}X^{3},\vspace{0.2cm}\\
\dot{Y}&=&-Z,\vspace{0.2cm}\\
\dot{Z}&=&A_{4}X+Y+\varepsilon^{2}\zeta_{2}Z,
\end{array}
\end{equation}
where
$$
\begin{array}{rcl}
A_{1}&=&\varepsilon^{2}(1/2\overline{a}_{1}^{2})(\overline{a}_{0}
(-4\overline{a}_{1}^{2}\zeta_{0}-\alpha_{1}\alpha_{2}\varepsilon
\sqrt{-\overline{a}_{1}\zeta_{0}}+
2\overline{a}_{1}\sqrt{-\overline{a}_{1}\zeta_{0}}(\alpha_{2}+
\varepsilon\xi_{2})\vspace{0.2cm}\\
&&+2\overline{a}_{1}\alpha_{0}\varepsilon(-2\overline{a}_{1}
\zeta_{0}+\alpha_{2}\sqrt{-\overline{a}_{1}\zeta_{0}})),
\vspace{0.2cm}\\
A_{2}&=&\varepsilon^{2}(1/2\overline{a}_{1})(\overline{a}_{0}
(3\alpha_{1}\varepsilon\sqrt{-\overline{a}_{1}\zeta_{0}}+
\overline{a}_{1}(\alpha_{2}+6\sqrt{-\overline{a}_{1}\zeta_{0}}+
\varepsilon\xi_{2})     )\vspace{0.2cm}\\
&&+\overline{a}_{1}\alpha_{0}\varepsilon(\alpha_{2}+6\sqrt{
-\overline{a}_{1}\zeta_{0}})),\vspace{0.2cm}\\
A_{3}&=&\varepsilon^{2}(\overline{a}_{1}\alpha_{0}\varepsilon+
\overline{a}_{0}(\overline{a}_{1}+\alpha_{1}\varepsilon)),
\vspace{0.2cm}\\
A_{4}&=&(\omega^2-1)[\overline{a}_{0}^{3}-\alpha_{0}^{3}
\varepsilon^{3}-\overline{a}_{0}^{2}\varepsilon(\alpha_{0}+
\varepsilon\xi_{0}+
\overline{a}_{0}\alpha_{0}\varepsilon^{2}(\alpha_{0}+2\varepsilon\xi_{0}))]
\vspace{0.2cm}\\
&&+\overline{a}_{0}^{4}\varepsilon(\beta_{1}+\varepsilon\zeta_{1}).
\end{array}
$$

The linear part of \eqref{systemnewp1} at $p_{-}$ in the real Jordan
normal form when $\varepsilon=0$ is given by \eqref{jordan}, and
doing also the linear change of variables $(X,Y,Z)\rightarrow
(u,v,w)$ given by \eqref{changevariables} we write the linear part
of system \eqref{systemnewp1} in its real Jordan normal form when
$\e=0$, we obtain the system
\begin{equation}\label{systemuvwp1}
\begin{array}{rl}
\dot{u}=& -\omega v+ \e (B_{1}v+B_{2}w) + \e^{2}\zeta_{2}u,\vspace{0.2cm}\\
\dot{v}=& \omega u +\dfrac{\varepsilon\alpha_{0}(\omega^2-1)u}
{\overline{a}_{0}\omega} +\varepsilon\dfrac{\omega^2-1}
{\overline{a}_{0}\omega}B_{3},\vspace{0.2cm}\\
\dot{w}=&\dfrac{\varepsilon\alpha_{0}}{\overline{a}_{0}}u+\dfrac{
\varepsilon^{2}}{\overline{a}_{0}}B_{3},
\end{array}
\end{equation}
where
$$
\begin{array}{rcl}
B_{1}&=&-\dfrac{\overline{a}_{0}^{3}(\beta_{1}+
\zeta_{1})-\overline{a}_{0}
(\alpha_{0}+\varepsilon\xi_{0})(\omega^2-1)-\e
\alpha_{0}^{2}(\omega^2-1)}{\overline{a}
_{0}^{2}\omega},\vspace{0.2cm}\\
B_{2}&=&-\dfrac{\overline{a}_{0}^{3}(\beta_{1}+
\zeta_{1})-\overline{a}_{0} (\alpha_{0}+\varepsilon\xi_{0})
(\omega^2-1)+\e\alpha_{2}^{2}(\omega^2-1)}{\overline{a}_{0}^{2}
\omega^{2}},\vspace{0.2cm}\\
B_{3}&=&\xi_{0}u-\dfrac{\overline{a}_{0}^{2}(w+\omega v)(2(-2\overline{
a}_{1}\zeta_{0}+\alpha_{2}\sqrt{\overline{a}_{1}\zeta_{0}})\omega^{4}
-\overline{a}_{0}\overline{a}_{1}(\alpha_{2}}{2\overline{a}_{1}\omega^{6}}
\vspace{0.2cm}\\
&&\dfrac{+6\omega^{2}(w+\omega v)\sqrt{\overline{a}_{1}\zeta_{0}})
+2\overline{a}_{0}^{2}\overline{a}_{1}^{2}(w+\omega v)^{2})}
{2\overline{a}_{1}\omega^{6}}.
\end{array}
$$

If we write system \eqref{systemuvwp1} in cylindrical coordinates
$(r,\theta,w)$ defined by $u=r\cos\theta$, $v=r\sin\theta$ and
$w=w$, after we take as new independent variable the angle $\theta$,
and we apply to the system  $dr/d\theta$ and $dw/d\theta$ that we
obtain the second order averaging method described in Theorem
\ref{averagingsecond order}, we get that the function
$f=(f_{1},f_{2})$ is identically zero, and that the function
$g=(g_{1},g_{2})$ is
$$
\begin{array}{rcl}
g_{1}(r,w)&=&\dfrac{\pi r}{4\omega}\left(4\zeta_{2}+\dfrac{
\overline{a}_{0}(\omega^2-1)(4\overline{a}_ {0}\overline{a}_{1}
w(\alpha_{2}+6\sqrt{-\overline{a}_{1}\zeta_{0}})\omega^{2}}
{\overline{a}_{1}\omega^{6}}\right.\vspace{0.2cm}\\
&&\left.+\dfrac{4(2\overline{a}_{1}\zeta_{0}-\alpha_{2}
\sqrt{-\overline{a}_{1}\zeta_{0}})\omega^{4}-3\overline{a}_{0}^{2}
\overline{a}_{1}^{2}(4w^{2}+3r^{2}\omega^{2}))}{\overline{a}_{1}
\omega^{6}}\right),\vspace{0.2cm}\\
g_{2}(r,w)&=&\dfrac{\overline{a}_{0}\pi}{2\overline{a}_{1}
\omega^{7}}\left(     4w(2\overline{a}_{1}\zeta_{0}-\alpha_{2}
\sqrt{-\overline{a}_{1}\zeta_{0}})\omega^{4}-2\overline{a}_{0}^{2}
\overline{a}_{1}^{2}w(2w^{2}+
3r^{2}\omega^{2})      \right.\vspace{0.2cm}\\
&&\left.   +\overline{a}_{0}\overline{a}_ {1}(\alpha_{2}+6\sqrt{
-\overline{a}_{1}\zeta_{0}})\omega^{2}(2w^{2}+r^{2}\omega^{2})
\right).
\end{array}
$$

In order to find solutions $(r^{*},w^{*})$ of $g=0$ we compute a
Gr\"{o}bner basis $\{b_{k}(r,w)$, $k=1,\ldots, 20\}$ in the variables
$r$ and $w$ for the set of polynomials $\{\overline{g}_{1}(r,w),
\overline{g}_{2}(r,w)\}$ where $\overline{g}_{1}= 4(\overline{a}_{1}
\omega^{7}/\pi r)g_{1}$ and $\overline{g}_{2}= (2\overline{a}_{1}
\omega^{7}/\overline{a}_{0}\pi)g_{2}$ and then we will look for
roots of $b_{1}$ and $b_{2}$. It is a known fact that the solutions
of a Gr\"{o}bner basis of $\{\overline{g}_{1}(r,w), \overline{g}_{2}
(r,w)\}$ are the solutions of $\overline{g}_{1}=0$ and $\overline{g}
_{2}=0$, consequently solutions of $g_{1}=0$ and $g_{2}=0$ as well.
For more information about Gr\"{o}bner basis see \cite{AL} and
\cite{L2}.

\smallskip

The Gr\"{o}bner basis for the polynomials $\{\overline{g}_{1}(r,w),
\overline{g}_{2}(r,w)\}$ in the variables $r$ and $w$ is formed by
twenty polynomials. We only use two polynomials of this basis,
namely,
$$
\begin{array}{rcl}
G_{1}(r,w)&=&30(\omega^2-1)\overline{a}_{0}^{4}\overline{a}_{1}^{3}w^{3}-
15(\omega^2-1)\omega^{2}\overline{a}_{1}^{2}\overline{a}_{0}^{3}
(\alpha_ {2}+6\sqrt{-\overline{a}_{1}\zeta_{0}})w^{2}\vspace{0.2cm}\\
&&+2\overline{a}_{0}\overline{a}_{1}\omega^{4}(-6\overline{a}_{1}
\zeta_{2}\omega^{2}+\overline{a}_{0}(\alpha_ {2}^{2}-42\overline{a}_{1}
\zeta_{0}\vspace{0.2cm}\\
&&+15\alpha_{2}\sqrt{-\overline{a}_{1}\zeta_{0}}(\omega^2-1)))w+2
\omega^{6}(\overline{a}_{1}(\alpha_{2}+6\sqrt{-\overline{a}_{1}
\zeta_{0}}\zeta_{2}\omega^{2})\vspace{0.2cm}\\
&&+\overline{a}_{0}(8\overline{a}_{1}\alpha_{2}\zeta_{0}-\alpha_{2}^{2}
\sqrt{-\overline{a}_{1}\zeta_{0}}-12(-\overline{a}_{1}\zeta_{0})
\sqrt{-\overline{a}_{1}\zeta_{0}}(\omega^2-1)))
\end{array}
$$
and
$$
\begin{array}{rcl}
G_{2}(r,w)&=&\overline{a}_{0}\overline{a}_{1}\omega^{2}(6\overline{a}_{0}
\overline{a}_{1}w-\alpha_{2}\omega^{2}-6\omega^{2}\sqrt{-\overline{a}_{1}
\zeta_{0}})r^{2}+2w(2\overline{a}_{0}^{2}
\overline{a}_{1}^{2}w^{2}\vspace{0.2cm}\\
&&-\overline{a}_{0}\overline{a}_{1}w(\alpha_{2}+6\sqrt{-\overline{a}_{1}
\zeta_{0}})\omega^{2}+2(-2\overline{a}_{1}\zeta_{0}+
\alpha_{2}\sqrt{-\overline{a}_{1}\zeta_{0}})\omega^{4}).
\end{array}
$$

Since $G_{1}(r,w)= G_{1}(w)$ is a polynomial of degree $3$ in the
variable $w$, it is clear that we can have at most three real
solutions for $w$ depending on the parameters of the zero-Hopf
family. Replacing these three values of $w$ in the second polynomial
$G_{2}$ we have six solutions for $r$ of the form $\pm r^{*}_{i}$
for $i=1,2,3$, because $G_{2}(r,w)$ is of the form
$P_{1}(w)r^{2}+P_{2}(w)$. However, since $r$ must be positive, we
have at most three good solutions for $G_{1}=0$ and $G_{2}=0$.
Consequently, we have at most three good solutions for
$g=(g_{1},g_{2})=0$ and then, by Theorem \ref{averagingsecond order}
and using the same arguments that in the proof of Theorem
\ref{maintheoremorigin} when we go back through the changes of
coordinates, we can have at most three limit cycles bifurcating from
the equilibrium point $p_{-}$.

\smallskip

Moreover, if we consider
$\alpha_{2}=-6\sqrt{-\overline{a}_{1}\zeta_{0}}$, then the relations
$(g_{1}(r,w),g_{2}(r,w))=(0,0)$ provide three solutions given by
$$
\begin{array}{l}
(r^{*},w^{*}_{\pm})=\vspace{0.2cm}\\
\left(\dfrac{2\omega}{\sqrt{15}}\sqrt{\dfrac{8\overline{a}_{0}
\zeta_{0}(1-\omega^{2})-\zeta_{2}}{\overline{a}_{0}^{3}\overline{a}_{1}
(\omega^2-1)}},
\pm\dfrac{\omega^{2}}{\overline{a}_{0}\sqrt{\overline{a}_{1}}}
\sqrt{-\dfrac{4\overline{a}_{0}\zeta_{0}(1-\omega^{2})+2\zeta_{2}
\omega^{2}}{5\overline{a}_{0}(\omega^2-1)}} \right)
\end{array}
$$
and
$$
(r^{0},w^{0})=\left(\dfrac{2\omega}{\sqrt{3}}\sqrt{\dfrac{4\overline{a}_{0}
\zeta_{0}(1-\omega^{2})+\zeta_{2}}{\overline{a}_{0}^{3}\overline{a}_{1}
(\omega^2-1)}} ,0\right).
$$
as long as the expressions in the square roots are positives. This
shows that three limit cycles can bifurcate simultaneous from the
equilibrium $p_-$. In a similar way we can produce examples with
one, or two limit cycles bifurcating from $p_-$. This completes the
proof of the theorem.
\end{proof}

\section*{Acknowledgments}

We thanks to Pedro T. Cardin and Tiago de Carvalho their comments
which help us to improve the presentation of this paper.

\smallskip

The first author is supported by... The second author is supported
by...  The third author is supported by the FAPESP-BRAZIL grants
2010/18015-6 and 2012/05635-1. The fourth author is partially
supported by the grants MICINN/FEDER MTM 2008--03437, AGAUR 2009SGR
410, ICREA Academia and FP7-PEOPLE-2012-IRSES-316338. All the
authors are also supported by the joint project CAPES--MECD grant
PHB-2009-0025-PC.


\begin{thebibliography}{99}

\bibitem{AL} {\sc W. W. Adams and P. Loustaunau},
{\it An Introduction to Gr\"{o}bner Bases}, American Mathematical
Society, Graduate Studies in Mathematics, Vol. {\bf 3}, 1994.

\bibitem{AMSL} {\sc A. Algaba, M. Merino, F. F. S\'{a}nchez and A. R. Luis},
{\it Hopf bifurcations and their degeneracies in Chua's equation},
Internat. J. Bifur. Chaos Appl. Sci. Engrg. {\bf 21} (2011), no. 9,
2749--2763.

\bibitem{BS} {\sc I. Baldom\'{a} and T. M. Seara},
{\it Brakdown of heteroclinic orbits for some analytic unfoldings of
the zero-Hopf singularity}, J. Nonlinear Sci. {\bf 16} (2006),
543--582.

\bibitem{BS2} {\sc I. Baldom\'{a} and T. M. Seara},
{\it The inner equation for generic analytic unfoldings of the
zero-Hopf singularity}, Discrete Contin. Dyn. Syst. Ser. B {\bf 10}
(2008), 323--347.

\bibitem{B} {\sc N. N. Bogoliubov},
{\it On some statistical methods in mathematical physics}, Izv. vo
Akad. Nauk Ukr. SSR, Kiev, 1945.

\bibitem{BK} {\sc N. N. Bogoliubov and N. Krylov},
{\it The application of methods of nonlinear mechanics in the theory
of stationary oscillations}, Publ. 8 of Ukrainian Acad. Sic. Kiev,
1934.

\bibitem{BMM}
{\sc D. C. Braga, L. F. Mello and M. Messias}, {\it Degenerate Hopf
bifurcations in Chua's system}, Internat. J. Bifur. Chaos Appl. Sci.
Engrg. {\bf 19} (2009), no. 2, 497--515.

\bibitem{BV} {\sc H. W. Broer and G. Vegter},
{\it Subordinate Silnikov bifurcations near some singularities of
vector fields having low codimension}, Ergodic Theory Din. Syst.
{\bf 4} (1984), 509--525.

\bibitem{BL} {\sc A. Buic\u a and J. Llibre},
{\it Averaging methods for finding periodic orbits via Brouwer
degree}, Bull. Sci. Math. {\bf 128} (2004), 7--22.

\bibitem{CK} {\sc A. R. Champneys and V. Kirk},
{\it The entwined wiggling of homoclinic curves emerging from
saddle-node/Hopf instabilities}, Physic D {\bf 195} (2004), 77--105.

\bibitem{F} {\sc P. Fatou},
{\it Sur le mouvement d'un syst\`{a}me soumis \`{a} des forces \`{a} courte
p\'{e}riode}, Bull. Soc. Math. France {\bf 56} (1928), 98--139.

\bibitem{G} {\sc J. Guckenheimer},
{\it On a codimension two bifurcation}, Lectures Notes in Math. {\bf
898} (1980), 99--142.

\bibitem{GH} {\sc J. Guckenheimer and P. Holmes},
{\it Nonlinear oscillations, dynamical systems, and bifurcations of
vector fields. Revised and corrected reprint of the 1983 original},
Applied Mathematical Sciences {\bf 42}, Springer-Verlag, New York,
1990.

\bibitem{H} {\sc M. Han},
{\it Existence of periodic orbits and invariant tori in codimension
two bifurcation of three dimensional systems}, J. Sys. Sci $\&$
Math. Scis. {\bf 18} (1998), 403--409.

\bibitem{K} {\sc Y. A. Kuznetsov},
{\it Elements of Applied Bifurcation Theory}, Spring-Verlag, 3rd
edition, 2004.

\bibitem{L2} {\sc H. Li},
{\it Gr\"{o}bner Bases in Ring Theory},  World Scientific Publishing,
2011.

\bibitem{L} {\sc J. Llibre},
{\it Zero-Hopf bifurcation in the R\"{o}ssler system}, preprint, 2012.

\bibitem{LV} {\sc J. Llibre and C. Valls},
{\it Analytic integrability of a Chua system}, J. Math. Phys. {\bf
49} (2008), no. 10, 102701, 9 pp.

\bibitem{M} {\sc M. Messias},
{\it Dynamics at infinity of a cubic Chua's system},  Internat. J.
Bifur. Chaos Appl. Sci. Engrg. {\bf 21} (2011), no. 1, 333--340.

\bibitem{RG} {\sc B. Rossetto and Jean-Marc Ginoux},
{\it Differential geometry and mechanics: applications to chaotic
dynamical systems}, Internat. J. Bifur. Chaos Appl. Sci. Engrg. {\bf
16} (2006), no. 4, 887--910.

\bibitem{SV} {\sc J. A. Sanders and F. Verhulst},
{\it Averaging Methods in Nonlinear Dynamical Systems}, Appl. Math.
Sci., vol. {\bf 59}, Springer, 1985.

\bibitem{SM} {\sc J. Scheurle and J. Marsden},
{\it Bifurcation to quasi-periodic tori in the interaction of steady
state and Hopf bifurcations}, SIAM. J. Math. Anal. {\bf 15} (1984),
1055--1074.

\end{thebibliography}
\end{document}